\documentclass{amsart}
\usepackage{amssymb}

\newtheorem{Theorem}{Theorem}[section]

\newtheorem{Lemma}[Theorem]{Lemma}
\newtheorem{Proposition}[Theorem]{Proposition}
\newtheorem{Remark}{Remark}[section]

\title[Lifespan for 1D Damped Wave eq. with 0 0-th Fourier moment]{Lifespan estimates for 1d damped wave equation with zero moment initial data}
\author[K. Fujiwara]{Kazumasa Fujiwara}

\address[K. Fujiwara]{
Faculty of Advanced Science and Technology,
Ryukoku University,
1-5 Yokotani,Seta Oe-cho,Otsu,Shiga, Japan
}

\email{fujiwara.kazumasa@math.ryukoku.ac.jp}

\author[V. Georgiev]{Vladimir Georgiev}

\address[V.Georgiev]{
Department of Mathematics,
University of Pisa,
Largo Bruno Pontecorvo 5,
I - 56127 Pisa, Italy}
\address{
Faculty of Science and Engineering, Waseda University,
3-4-1, Okubo, Shinjuku-ku, Tokyo 169-8555, Japan}
\address{
Institute of Mathematics and Informatics,  Bulgarian Academy of Sciences, Acad. Georgi Bonchev Str., Block 8, Sofia, 1113, Bulgaria
}
\email{georgiev@dm.unipi.it}

\subjclass{35A01,35B33}
\keywords{
classical damped wave equations,
Cauchy problem,
power-type nonlinearity,
critical exponent,
lifespan estimate
}


\begin{document}

\begin{abstract}
In this manuscript,
a sharp lifespan estimate
of solutions to semilinear classical damped wave equation
is investigated
in one dimensional case
when the Fourier 0th moment of sum of initial position and speed is $0$.
Especially,
it is shown that the behavior of lifespan
changes with $p=3/2$ with respect to the size of the initial data.
\end{abstract}

\maketitle

\section{Introduction}
In this manuscript,
we study the Cauchy problem of the following classical 1d damped wave equation:
    \begin{align}
    \begin{cases}
    \partial_t^2 u + \partial_t u - \Delta u = |u|^{p},
    &\ t \in (0,T_0), \quad x \in \mathbb R,\\
    u(0) = u_0,
    &\ x \in  \mathbb{R},\\
    \partial_t u(0) = u_1,
    &\ x \in \mathbb R.
    \end{cases}
    \label{eq:DW}
    \end{align}
Our goal is to obtain optimal lifespan estimates of solutions to \eqref{eq:DW}
when the $0$th moment of sum of initial position and speed is $0$.

Extensive studies have been conducted
on the existence and nonexistence of time-global solutions
to \eqref{eq:DW} for small initial data
in the framework of perturbation argument.
Namely, 
the maximal existence time has been investigated
through the examination of quantities
that reflect resemblance between
the behavior of solutions to \eqref{eq:DW}
and that of their corresponding free linear solutions.
We note that
the Fujita exponent $p_F(1)=3$ is known to be the critical exponent to \eqref{eq:DW},
that is, criteria of the existence and nonexistence of time-global solutions
for small data in the sense explained below.

In the case where $p > 3$,
the global existence of mild solutions is well-known from the article of
Todorova and Yordanov \cite{TY01}
if $(u_0,u_1) \in W^{1,2} \times L^2$ is sufficiently small and compactly supported.
Here $W^{1,2}$ denotes the usual Sobolev space of 1st order based on $L^2$
defined by the collection of all $L^2$ functions $f$ satisyfing $f, f' \in L^2$.
We also call $u \in C([0,\infty); W^{1,2}) \cap C^1((0,\infty); L^2)$
as a mild solution to \eqref{eq:DW} if $u$ satisfies the integral form of \eqref{eq:DW}:
    \begin{align}
    u(t) = S(t)(u_0+u_1) + \partial_t S(t) u_0    
    + \int_0^t S(t-\tau) |u(\tau)|^p d \tau,
    \label{eq:IDW}
    \end{align}
where $S(t)$ is given by
    \[
    S(t) f(x)
    = \frac 1 2 \int_{-t}^t I_0 \bigg( \frac{\sqrt{t^2-y^2}}{2} \bigg) f(x-y) dy
    \]
and $I_0$ is the modified Bessel function of $0$ order whose Taylor series is 
    \[
    I_0 (y) = \sum_{k \geq 0} \frac{1}{(k!)^2} \bigg( \frac{y}{2} \bigg)^2.
    \tag*{\cite[8.445]{GR07}}
    \]
We note that the analysis of \cite{TY01} is based on a modified energy estimate.
Moreover, Marcati and Nishara \cite{MN03} showed the global existence
for small $W^{1,2} \times L^2$ data
without compactness of support of initial data.
Especially, their argument is based on the following heat-like $L^p-L^q$ estimate
for fundamental solutions to \eqref{eq:DW}:
\begin{Lemma}[{\cite[Theorem 1.2]{MN03}}]
\label{Lemma:SEstimate}
For $1 \leq q \leq p \leq \infty$ and positive $\alpha, \beta$ with $1 \leq \alpha + \beta \leq 3$,
the following estimates hold:
    \begin{align*}
    \| S(t) f \|_{L^p}
    &\leq C (1+t)^{- \frac 1 2 (\frac 1 q - \frac 1 p)} \| f \|_{L^q},\\
    \bigg\| \partial_x^\alpha \partial_t^\beta \big( S(t) f - e^{-t/2} W(t) f \big) \bigg\|_{L^p}
    &\leq C (1+t)^{- \frac 1 2 (\frac 1 q - \frac 1 p) - \frac \alpha 2 - \beta} \| f \|_{L^q},
    \end{align*}
where $W(t)$ is the 1-D wave operator defined by
    \[
    W(t) f(x) = \frac 1 2 \int_{x-t}^{x+t} f(y) dy.
    \]
Moreover, the estimates
    \begin{align*}
    \| S(t) f - e^{t \Delta} f \|_{L^p}
    &\leq C (1+t)^{- \frac 1 2 (\frac 1 q - \frac 1 p) - 1} \| f \|_{L^q},\\
    \bigg\| \partial_x^\alpha \partial_t^\beta \big( S(t) f - e^{t \Delta} f
    - e^{-t/2} W(t) f \big) \bigg\|_{L^p}
    &\leq C (1+t)^{- \frac 1 2 (\frac 1 p - \frac 1 q) - \frac \alpha 2 - \beta - 1} \| f \|_{L^q}
    \end{align*}
hold, where $e^{t \Delta}$ is the 1-D heat operator defined by
    \[
    e^{t \Delta} f(x) = \frac{1}{\sqrt{2 \pi t}} \int_{-\infty}^\infty e^{-\frac{|x-y|^2}{4t}} f(y) dy.
    \]
\end{Lemma}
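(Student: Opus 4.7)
The plan is to prove the lemma by Fourier analysis combined with a frequency decomposition of the propagator. Taking the Fourier transform of the homogeneous equation $\partial_t^2 u + \partial_t u - \partial_x^2 u = 0$ with data $(0,\delta_0)$ yields the explicit symbol
\[
\widehat{S}(t,\xi) = e^{-t/2}\,\frac{\sinh\!\bigl(t\sqrt{1/4-\xi^2}\bigr)}{\sqrt{1/4-\xi^2}},
\]
analytically continued through $|\xi|=1/2$ to $e^{-t/2}\sin(t\sqrt{\xi^2-1/4})/\sqrt{\xi^2-1/4}$ for high frequencies. I would fix a smooth cutoff $\chi_L$ supported in $\{|\xi|\le 1/2\}$, set $\chi_H=1-\chi_L$, and split $S = S_L + S_H$ accordingly.

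For the low-frequency part $S_L$, the characteristic roots $\lambda_\pm(\xi)=(-1\pm\sqrt{1-4\xi^2})/2$ expand as $\lambda_+(\xi)=-\xi^2-\xi^4+O(\xi^6)$ and $\lambda_-(\xi)=-1+\xi^2+O(\xi^4)$, so
\[
\widehat{S}_L(t,\xi) = \chi_L(\xi)\,\frac{e^{\lambda_+(\xi)t}}{\sqrt{1-4\xi^2}} - \chi_L(\xi)\,\frac{e^{\lambda_-(\xi)t}}{\sqrt{1-4\xi^2}},
\]
whose second term is exponentially damped in $t$. Comparing the first term with the heat symbol $e^{-t\xi^2}$, the difference can be written as $e^{-t\xi^2}\,g(t,\xi)$ with $g(t,\xi)=O(t\xi^4)+O(\xi^2)$ on $\operatorname{supp}\chi_L$. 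Young's convolution inequality $\|K*f\|_{L^p}\le\|K\|_{L^r}\|f\|_{L^q}$ (with $1+1/p=1/r+1/q$) together with the Gaussian scaling $\|\mathcal F^{-1}[\xi^k e^{-t\xi^2}]\|_{L^r}\lesssim t^{-k/2-(1-1/r)/2}$ then gives the heat-type decay and the extra $(1+t)^{-1}$ once $e^{t\Delta}$ is subtracted. Spatial and temporal derivatives insert factors $i\xi$ and $\lambda_\pm(\xi)$, which on the low-frequency side translate into the extra $(1+t)^{-\alpha/2-\beta}$ through the same Gaussian scaling.

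For the high-frequency part $S_H$, I would exploit the universal $e^{-t/2}$ prefactor. Using
\[
\frac{\sin(t\sqrt{\xi^2-1/4})}{\sqrt{\xi^2-1/4}} - \frac{\sin(t\xi)}{\xi}
\]
the symbol of $S_H-e^{-t/2}W(t)$ decays like $|\xi|^{-3}$ at infinity, so its inverse Fourier transform lies in $L^1$ uniformly in $t$, and the exponential factor $e^{-t/2}$ absorbs any polynomial loss coming from $\xi^\alpha$ or $\lambda_\pm^\beta$. This yields both the high-frequency contribution to the first estimate and the refined comparison with $e^{-t/2}W(t)$ in the second one. Assembling the low- and high-frequency pieces gives each of the four stated bounds.

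The main obstacle I expect is the sharpness of the final doubly-subtracted estimate, where the leading order must cancel simultaneously in both regimes. This requires keeping the next term in the expansion of $\lambda_+$ near $0$ and careful control of the factor $(1-4\xi^2)^{-1/2}$, whose derivatives blow up near $|\xi|=1/2$; the transition is handled by making the cutoff $\chi_L$ smooth and checking that $\chi_L'$ localizes to a compact annulus on which both comparison symbols are smooth, so that the error kernels are Schwartz and absorbed into the exponentially decaying remainder.
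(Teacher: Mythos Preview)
The paper does not prove Lemma~\ref{Lemma:SEstimate}; it is quoted verbatim from Marcati--Nishihara \cite[Theorem~1.2]{MN03} and used as a black box throughout, so there is no in-paper argument to compare against. Your Fourier-side strategy---splitting the multiplier $\widehat{S}(t,\xi)=e^{-t/2}\sinh\bigl(t\sqrt{1/4-\xi^2}\bigr)/\sqrt{1/4-\xi^2}$ into low and high frequencies, expanding the characteristic roots $\lambda_\pm$ near $\xi=0$ to isolate the heat profile $e^{-t\xi^2}$, and using the universal $e^{-t/2}$ prefactor to control the high-frequency piece against $e^{-t/2}W(t)$---is exactly the method of \cite{MN03} and of its Fourier refinement \cite{IIMW19} (which the paper also cites), so methodologically you are on the standard track.

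One detail to tighten: your claim that the high-frequency symbol of $S_H-e^{-t/2}W(t)$ decays like $|\xi|^{-3}$ overstates matters. A first-order expansion of $\sin\bigl(t\sqrt{\xi^2-1/4}\bigr)/\sqrt{\xi^2-1/4}-\sin(t\xi)/\xi$ for large $|\xi|$ gives a leading term of order $t\,|\xi|^{-2}$, not $|\xi|^{-3}$, and symbol decay alone does not place the corresponding kernel in $L^1$. You will need to pair the decay with control of $\xi$-derivatives of the symbol (integration by parts in the inverse transform, or a Mikhlin-type argument) to obtain the $L^r$ bound on the kernel required for Young's inequality. The polynomial-in-$t$ loss that then appears is, as you correctly note, absorbed by the factor $e^{-t/2}$, so this is a repairable technical point rather than a structural flaw in the plan.
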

\noindent
Applying Lemma \ref{Lemma:SEstimate} and employing the standard contraction argument to \eqref{eq:IDW},
the small data global existence of mild solutions is established.
We note that the three dimensional case is treated in \cite{N03} as well.
Later, Ikeda, Inui, Okamoto, and Wakasughi extended this approach by Fourier analysis of $S(t)$ in \cite{IIMW19}.
We also refer \cite{M97} as a pioneering work.

In the case where $p \leq 3$,
the global non-existence and corresponding lifespan estimate have been studied
when
        \begin{align}
    M_0 (u_0+u_1) >0,
    \label{eq:M0positive}       
    \end{align}
where $M_0$ is the Fourier $0$-th moment defined by
        \[
    M_0(f) = \int_{\mathbb R^n} f(x) dx
    \]
for $f \in L^1$.
Indeed,
let
$p \leq 3$ and
$f_0,f_1 \in L^1 \cap L^p$ is sufficiently regular and satisfies \eqref{eq:M0positive},
then
there exist positive constants $c, C$ depends on $(f_0,f_1)$
such that for sufficiently small $\varepsilon > 0$,
the lifespan $T_0 = T_0(u_0,u_1)$ is estimated by
    \begin{align}
    T_p(c \varepsilon)
    \leq T_0
    \leq T_p(C \varepsilon)
    \label{eq:T0TpEstimate}
    \end{align}
with $(u_0,u_1) = (\varepsilon f_0, \varepsilon f_1)$,
where
    \[
    T_{p}(\varepsilon)
    = \begin{cases}
    \varepsilon^{- \frac{2(p-1)}{3-p}}
    & \mathrm{if} \quad p \in (1, 3),\\
    \exp(\varepsilon^{- 2 }) & \mathrm{if} \quad p = 3.
    \end{cases}
    \]
We note that 
the second estimate of \eqref{eq:T0TpEstimate} was obtained by Marcati and Nishihara \cite{MN03}.
The first estimate of \eqref{eq:T0TpEstimate}
was obtained by Li and Zhou \cite{LZ95}.
More precisely,
they showed that $U(t) = \inf_{|x| \leq \sqrt t} \sqrt t u(x)$
blows up at a finite time
by introducing an ordinary differential inequality (ODI) with respect to $U$.
We note that on the domain $\{x; |x| \leq \sqrt t\}$,
the free solutions to \eqref{eq:DW} behaves like free heat solutions
and this property intends the ODI showing the blowup of $U$.
We also note that the coefficient $\sqrt t$ in the definition of $U$
is connected with the decay rate of free heat solutions.
We revisit their ODI argument in subsection \ref{subsection:UpperBound}.
We remark that
the argument of Li and Zhou \cite{LZ95} covers 2 dimensional case as well.
On the other hand,
in higher dimensional subcritical and critical cases,
where $p \in (1,1+2/n]$ in $n$ dimensional cases,
the second estimate of \eqref{eq:T0TpEstimate}
was obtained by
Ikeda and Wakasugi \cite{IW15} and
Ikeda and Sobajima \cite{IS19}
by using arguments for the weak form of \eqref{eq:DW},
respectively.
However with this weak form argument,
the positiveness of $M_0(u_0+u_1)$ is essential.
We also refer \cite{IO16} for related subjects.

Our interest in this paper is to consider the lifespan
when $M_0(u_0+u_1) = 0$ and $p \leq 3$.
In this case,
by using an ODE argument of \cite{FIW19},
the global non-existence is easily seen.
However, one cannot apply arguments above directly
to estimate the lifespan.
It is because in early works,
their analysis bases on a perturbation theory
and the similarity
between the asymptotic behavior of fundamental solutions of \eqref{eq:DW}
and that of free heat solutions.
When $M_0(u_0+u_1)=0$,
the asymptotic behavior of the fundamental solutions of \eqref{eq:DW}
is still similar to that of free heat equations,
so they decay faster.
This difference of decaying speed causes a problem to apply a classical approach
to investigate the lifespan for solutions to \eqref{eq:DW}.

In \cite{FG23}, a partial answer was obtained in the case where
    \begin{equation}\label{ZL2}
    u_0(x)+u_1(x) =0, \mbox{ for almost every $x$}.
    \end{equation}
Here $W^{1,p}$ denotes the usual Sobolev space of $1$st order based on $L^p$,
namely $f \in W^{1,p} \Leftrightarrow f, f' \in L^p$.
Then the following lifespan estimate holds:
\begin{Proposition}[{\cite[Theorem 1.1]{FG23}}]
\label{Proposition:T0}
Let $n=1$ and $1 < p \leq 3$.
For any $f \in (W^{1,1} \cap W^{1,p}) \backslash \{0\}$,
there exist $\varepsilon_f > 0$ and $c_f, C_f > 0$ such that
if $0 < \varepsilon < \varepsilon_f$,
$u_0 = \varepsilon f$,
and $u_1 = - \varepsilon f$,
then the corresponding mild solution
\begin{equation}\label{eq.spid4}
     u
    \in C([0,T_0); W^{1,1} \cap W^{1,p})
    \cap C^1((0,T_0); L^{1} \cap L^{p})
\end{equation}
exists uniquely with
    \begin{align}
    T_{1,p}(c_f \varepsilon^p)
    \leq T_0
    \leq T_{1,p}(C_f \varepsilon^p).
    \label{eq:T0}
    \end{align}
\end{Proposition}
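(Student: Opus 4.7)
The plan is to prove the two inequalities of \eqref{eq:T0} by separate arguments, both exploiting the cancellation $u_0+u_1=0$ which removes the slower-decaying $S(t)(u_0+u_1)$ term from \eqref{eq:IDW}.

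\textbf{Lower bound (long existence).}
With $u_0=\varepsilon f$ and $u_1=-\varepsilon f$, the integral equation \eqref{eq:IDW} reads
\[
u(t) \;=\; \varepsilon\, \partial_t S(t) f \;+\; \int_0^t S(t-\tau)\, |u(\tau)|^p\, d\tau .
\]
Applying Lemma~\ref{Lemma:SEstimate} with $(\alpha,\beta)=(0,1)$ and absorbing the exponentially small contribution of $e^{-t/2}\partial_t W(t)f$, the linear part $\varepsilon\,\partial_t S(t) f$ decays in each $L^r$ norm with one extra factor $(1+t)^{-1}$ beyond the generic decay of $S(t)$. I would then perform a contraction mapping argument on a Banach space $X_T \subset C([0,T];L^1\cap L^p)$ equipped with a time-weighted norm encoding this improved decay, on a ball of radius proportional to $\varepsilon\,\|f\|_{W^{1,1}\cap W^{1,p}}$. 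Combining the first estimate of Lemma~\ref{Lemma:SEstimate} with the identity $\||u|^p\|_{L^r}=\|u\|_{L^{pr}}^p$, the nonlinear Duhamel term closes as long as an inequality of the schematic form
\[
\varepsilon^{p-1}\, G_p(T) \;\le\; c
\]
holds, where $G_p(T)$ is a polynomial in $T$ for $p\in(1,3)$ and logarithmic in $T$ when $p=3$. Inverting this inequality yields exactly the threshold $T\le T_{1,p}(c_f\varepsilon^p)$, and the spatial regularity claim \eqref{eq.spid4} follows by running the same argument for $\partial_x u$.

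\textbf{Upper bound (blow-up).}
The Ikeda--Sobajima weak-form test function method is not directly applicable because it requires $M_0(u_0+u_1)>0$, so I would instead adapt the Li--Zhou ODI revisited in Section~\ref{subsection:UpperBound}. The idea is to introduce a localised pointwise functional such as
\[
U(t) \;=\; \inf_{x\in\Omega(t)} t^{3/2}\, u(t,x)
\]
on a parabolic region $\Omega(t)\subset\{|x-x_0(t)|\lesssim \sqrt{t}\}$ chosen so that the leading profile $\varepsilon\,\partial_t S(t)f$ has a definite sign there and a lower bound of order $\varepsilon\, t^{-3/2}$. Combining the heat-kernel lower bound for $S(t)$ inside $\Omega(t)$ with Jensen's inequality on $\int |u|^p$ over $\Omega(t)$, one extracts an ODI of the form
\[
U''(t) + U'(t) \;\gtrsim\; t^{\theta_p}\, U(t)^p, \qquad U(t_\ast)\gtrsim\varepsilon ,
\]
with an exponent $\theta_p$ reflecting the faster $t^{-3/2}$ decay. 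Integration of this ODI produces blow-up by time $T_{1,p}(C_f\varepsilon^p)$.

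\textbf{Main obstacle.}
The upper bound is the delicate step. Removal of $M_0$-positivity destroys the monotone quantity $\int u\,dx$ underlying the Ikeda--Sobajima approach, and since $\partial_t e^{t\Delta}f$ has mean zero in $x$ it must change sign globally, which blocks a direct adaptation of Li--Zhou's pointwise infimum. Locating a parabolic region $\Omega(t)$ on which $u$ still admits a signed lower bound of size $\varepsilon\, t^{-3/2}$, and closing the associated ODI with initial size $\varepsilon$, is where the bulk of the work lies; this is also the mechanism that replaces $\varepsilon$ by $\varepsilon^p$ in the final lifespan, because the faster $t^{-3/2}$ decay shrinks the effective source size driving the ODI by exactly the factor $\varepsilon^{p-1}$.
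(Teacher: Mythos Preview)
The proposition is quoted from \cite{FG23} rather than proved in full here, but the paper does carry out the lower-bound argument for the (more general) third case of Theorem~\ref{Theorem:Main} in subsection~\ref{subsection:LowerBound}, and against that argument your lower-bound sketch has a concrete gap. A \emph{single} contraction in a norm carrying the improved $(1+t)^{-1}$ weight cannot close up to time $T_{1,p}(c_f\varepsilon^p)$. The obstruction is that the Duhamel term $\int_0^t S(t-\tau)|u(\tau)|^p\,d\tau$ has strictly positive zeroth moment (since $|u|^p\ge 0$), so it enjoys only the generic decay of Lemma~\ref{Lemma:SEstimate}, not the extra $(1+t)^{-1}$ gain of $\partial_t S(t)$. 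In the fast-weighted norm (the paper's $Z(T)$) the nonlinear piece therefore acquires an unmatched factor $(1+t)$, and the closing condition is $\varepsilon^{p-1}(1+T)\lesssim 1$, i.e.\ only $T\lesssim \varepsilon^{1-p}$, far short of $T_{1,p}(\varepsilon^p)=\varepsilon^{-2p(p-1)/(3-p)}$. The paper's remedy is a two-stage procedure: run the $Z$-contraction up to $T_1\sim\varepsilon^{1-p}$, read off $\|u(T_1)\|+\|\partial_t u(T_1)\|\lesssim\varepsilon^p$, and then \emph{restart} at $t=T_1$ with the slow $X$-norm and effective data size $\varepsilon^p$, which supplies the remaining lifespan $T_p(\varepsilon^p)$. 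Your last paragraph shows you sense the $\varepsilon\to\varepsilon^p$ mechanism, but you attribute it to a single contraction/ODI step; without the restart the threshold you write down is not actually produced by the inequality you set up.

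Your upper-bound plan is closer to what is done: the sign-restricted parabolic region you describe is precisely the role of $D_\pm(t)$ and Lemma~\ref{Lemma:ModifiedPointwiseControl}, and you correctly isolate the difficulty that $\partial_t e^{t\Delta}f$ changes sign. Two adjustments are worth noting. First, the argument runs through the \emph{integral} inequality of Lemma~\ref{Lemma:C3.2LZ95} (obtained from the mild formulation and the kernel lower bound for $S(t)$), not through a second-order ODI $U''+U'\gtrsim t^{\theta_p}U^p$. Second, the matching upper bound also has a two-scale structure mirroring the lower bound: one first tracks $u$ against the linear profile on an initial interval to obtain a pointwise lower bound of the right reduced size, and only then feeds this into the standard Li--Zhou integral inequality with $\beta=(p-1)/2$; a single ODI seeded with $U(t_\ast)\gtrsim\varepsilon$ does not by itself yield blow-up at $T_{1,p}(C_f\varepsilon^p)$.
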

\noindent
In this case, it is shown that solutions $u(t)$ to \eqref{eq:DW}
is approximated by $\partial_t S(t) u_0$ till $T_1 = c \varepsilon^{1-p}$
and
    \[
    S(t-T_1) ( u(T_1) + \partial_t u(T_1)) \sim \partial_t S(t-T_1) S(T_1) u_0
    \]
after $T_1$.
Therefore, solutions decay in the $L^1$ framework till $T_1$
and lifespan is extended as in Proposition \ref{Proposition:T0}.
We note that the second estimate of \eqref{eq:T0} is the longest possible
from the view point of \cite{FIW19}.

In the general case of
    \[
    M_0(u_0+u_1) = 0,
    \]
a similar phenomena is expected.
However, in this case,
not only Fourier $0$th moment but also higher order moment 
determine the decay rate of free solutions $S(t) f$.
It is because, Lemma \ref{Lemma:SEstimate} implies that
$S(t) f$ is approximated by $e^{t \Delta} f$
and the decay rate of $e^{t \Delta} f$ is determined by $M_k(f)$,
where $M_k(f)$ is the $k$-th moment of $f$ defined by
    \[
    M_k(f) = \int_{\mathbb R} x^k f(x) dx, 
    \]
for non-negative integer $k$.
By taking account of the decay rate of $S(t) f$,
we shall obtain the following lifespan estimate,
where $p=3/2$ is recognized as a critical exponent
in the case of $M_0(u_0+u_1) = 0$ and $M_1(u_0+u_1) \neq 0$:
\begin{Theorem}
\label{Theorem:Main}
Let\footnote{We can take initial data
    \[
    f_0\in W^{1,1} \cap W^{1,p}, f_1 \in L^1 \cap L^p
    \]
satisfying $x(f_0+f_1) \in L^1$ if $M_1(f_0+f_1) \neq 0$
and  $x^2(f_0+f_1) \in L^1$ if $M_1(f_0+f_1) = 0$.
    }
 $f_0, f_1 \in \mathcal S$  and $M_0(f_0+f_1) = 0$.
Let $\varepsilon > 0$ sufficiently small.
If $(u_0,u_1) = \varepsilon(f_0,f_1)$,
then the lifespan $T_0$ is estimated as follows:
\begin{enumerate}
    \item when $M_1(f_0+f_1) \neq 0$ and $p < 3/2$,
    \[
    c \varepsilon^{-\frac{p-1}{2-p}}
    \leq T_0
    \leq C \varepsilon^{-\frac{p-1}{2-p}}
    \]
    \item when $M_1(f_0+f_1) \neq 0$ and $p = 3/2$,
    \[
    c \varepsilon^{-2/3} e^{2 W(c\varepsilon^{-1/2})/3}
    \leq T_0
    \leq C \varepsilon^{-2/3} e^{2 W(C\varepsilon^{-1/2})/3}
    \]
    \item when $M_1(f_0+f_1) = 0$ or $p > 3/2$,
    \[
    T_{1,p}(c \varepsilon^{p})
    \leq T_0
    \leq T_{1,p}(C \varepsilon^{p}).
    \]
\end{enumerate}
\end{Theorem}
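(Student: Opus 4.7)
My plan follows the two-sided strategy of \cite{FG23}: a lower bound on $T_0$ is obtained by a contraction argument on the integral equation \eqref{eq:IDW}, while the upper bound comes from a bootstrap that eventually reduces the analysis to the classical positive-moment argument of Li--Zhou. The whole analysis is driven by a single observation: since $M_0(f_0+f_1)=0$, Lemma \ref{Lemma:SEstimate} gives one extra $(1+t)^{-1/2}$ of decay for $S(t)(f_0+f_1)$ compared with the usual heat rate; under the stronger hypothesis $M_1(f_0+f_1)=0$, yet another $(1+t)^{-1/2}$ is gained.

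The preparatory step is to make this precise. Using Lemma \ref{Lemma:SEstimate} to replace $S(t)$ by $e^{t\Delta}$ up to a $(1+t)^{-1}$-faster remainder, and then Taylor-expanding the heat kernel in the convolution around the origin, one obtains sharpened bounds of the schematic form
\[
\|S(t)(f_0+f_1)\|_{L^q} \lesssim (1+t)^{-\frac{1}{2}(1-\frac{1}{q})-\frac{1}{2}}|M_1(f_0+f_1)| + (1+t)^{-\frac{1}{2}(1-\frac{1}{q})-1}\|x^2(f_0+f_1)\|_{L^1},
\]
together with analogous sharpened estimates for $\partial_t S(t) f_0$. When these are inserted in the Duhamel term $\int_0^t S(t-s)|u(s)|^p\,ds$ and balanced against the refined free decay, the threshold $p=3/2$ emerges as the value at which the nonlinear contribution just fails to be absorbed by the linear decay in the case $M_1(f_0+f_1)\neq 0$.

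For the lower bound I would set up a contraction on a ball in $C([0,T];L^1\cap L^p)$ endowed with a norm weighted by the conjectural decay profile $\Phi(t)$, which differs across the three cases of the theorem. The contraction inequality then closes for $T$ up to the claimed lifespan. In the critical case $p=3/2$ the contraction condition takes a transcendental form of the type $T\log^{2/3}T\lesssim \varepsilon^{-1}$, whose inverse is naturally expressed through the Lambert $W$ function and produces the factor $e^{2W(c\varepsilon^{-1/2})/3}$ appearing in the statement.

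For the upper bound I would integrate \eqref{eq:DW} in $x$, which yields
\[
\frac{d}{dt}\bigl(M_0(u(t)) + M_0(\partial_t u(t))\bigr) = \|u(t)\|_{L^p}^p,
\]
so that the quantity $M_0(u+\partial_t u)$, which vanishes initially by assumption, grows monotonically. A bootstrap argument then shows that up to a carefully chosen intermediate time $T_1=T_1(\varepsilon)$ the solution is well approximated by its linearization, and by $T_1$ the accumulated mass has reached a size $\sim\varepsilon^p T_1^{\gamma}$ for an explicit $\gamma$ (e.g.\ $\gamma=(3-2p)/2$ in case (1)). Applying the classical positive-mass estimate \eqref{eq:T0TpEstimate} to the shifted Cauchy problem at $T_1$ and optimising $T_1$ against $\varepsilon$ then delivers the matching upper bound. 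The main obstacle is the critical case $p=3/2$, where the logarithmic factors generated both by the bootstrap and by the critical Li--Zhou estimate have to be tracked with precision; unraveling the resulting transcendental equation once more through the Lambert $W$ function is what completes the argument and produces the exotic lifespan in case (2).
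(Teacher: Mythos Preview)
Your lower-bound plan matches the paper's: the sharpened linear decay under vanishing moments is exactly Lemma~\ref{Lemma:SEstimateWithoutMode}, and the paper runs the contraction in weighted norms $Y(T)$ (when $M_1\neq 0$) or $Z(T)$ (when $M_1=0$), re-launching from the intermediate time $\widetilde T_{2,p}$ with the standard $X$-norm and recovering the Lambert-$W$ expression at $p=3/2$ just as you describe. For the upper bound, however, you take a genuinely different route. The paper never integrates in $x$ to accumulate zeroth moment; it works pointwise throughout by modifying the Li--Zhou functional. Since under $M_0=0$, $M_1\neq 0$ the leading linear profile is $\varepsilon M_1\,g'(x/\sqrt t)/t$, which is odd in $x$, the usual region $\{|x|<\sqrt t\}$ yields no sign information; the paper therefore restricts to one-sided strips $D_\pm(t)=\{\sqrt t/2<\pm x<\sqrt t\}$ (Lemma~\ref{Lemma:ModifiedPointwiseControl}), tracks $w_\pm(t)=\inf_{D_\pm(t)} t\,u(t,x)\gtrsim\varepsilon$ up to $\widetilde T_{2,p}$, then switches to $v(t)=\inf_{D_\pm(t)}\sqrt t\,u(t,x)$ and feeds it into the Li--Zhou ODI (Lemma~\ref{Lemma:C3.2LZ95}). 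Your mass-accumulation idea avoids the one-sided geometry and is conceptually cleaner, but the step ``apply \eqref{eq:T0TpEstimate} to the shifted problem at $T_1$'' is not a black box: the data $(u(T_1),\partial_t u(T_1))$ is not of the form $\delta\times(\text{fixed profile})$, so you must re-run the ODI with uniform constants, checking in particular that the accumulated $M_0$ actually dominates the higher-moment corrections of $u(T_1)+\partial_t u(T_1)$ on the parabolic region after a bounded waiting time---which in the end is work comparable to the paper's direct pointwise argument.
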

\noindent
We remark that the third case of Theorem \ref{Theorem:Main}
corresponds to Proposition \ref{Proposition:T0}.
We note that by canceling more moment of $u_0+u_1$,
the corresponding solutions may decay faster till some time but
the lifespan is not extended further
because of the viewpoint of \cite{FIW19}.

In the next section,
we show a proof of Theorem \ref{Theorem:Main}.
The key is the relation between the moments of $u_0+u_1$
and the decay rate of solutions to \eqref{eq:DW}.
In subsection \ref{subsection:LowerBound},
we show the lower bound of lifespan
by the approach of \cite{FG23}.
In order to estimate the lifespan from above,
we modify the functional of \cite{LZ95}
so as to use the first moment of $u_0+u_1$.
For details, see subsection \ref{subsection:UpperBound}.


\section{Proofs}
\subsection{Lower bound for lifespan}
\label{subsection:LowerBound}
We first note that  time-local $W^{1,1} \cap W^{1,p}$
mild solutions are  constructed
for any $u_0 \in W^{1,1} \cap W^{1,p}$ and $u_1 \in L^1 \cap L^p$
by a standard contraction argument and Lemma \ref{Lemma:SEstimate} 
with some $T>0$ and norm
    \begin{align*}
    \| u \|_{X(T)}
    &= \sup_{0 \leq t \leq T}
    \{ \|u(t)\|_{L^1}
    + (1+t)^{\frac{1}{2p'}} \|u(t)\|_{L^p} \}\\
    &+ \sup_{0 \leq t \leq T}
    (1+t)^{1/2} (\| \partial_x u(t)\|_{L^1}
    + (1+t)^{\frac{1}{2p'}} \| \partial_x u(t)\|_{L^p})\\
    &+ \sup_{0 \leq t \leq T}
   (1+t)
    (\| \partial_t u(t)\|_{L^1}
    + (1+t)^{\frac{1}{2p'}} \| \partial_t u(t)\|_{L^p}).
    \end{align*}
For detail, we refer \cite[Section 3]{MN03} and \cite[Section 2]{FG23}, for example.

Here, we recall the estimate of linear solutions without some Fourier moments.
\begin{Lemma}
\label{Lemma:SEstimateWithoutMode}
Let $f \in \mathcal S$ and $p \geq 1$.
Then the estimate
    \[
    \| S(t) f \|_{L^p}
    \leq C(t+1)^{-1/p'} \|f\|_{L^1}
    \]
holds.
Moreover,
the following estimate holds:
    \[
    \| S(t) f \|_{L^p}
    \leq C
    \begin{cases}
    (t+1)^{-1/2-1/p'} \|f\|_{L^1}
    &\mathrm{if} \quad M_0(f) = 0,\\
    (t+1)^{-1-1/p'} \|f\|_{L^1}
    &\mathrm{if} \quad M_0(f) = M_1(f) = 0.
    \end{cases}
    \]
\end{Lemma}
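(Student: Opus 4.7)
The plan is to reduce everything to bounds on the heat semigroup. By Lemma \ref{Lemma:SEstimate}, one has
\[
S(t)f = e^{t\Delta} f + e^{-t/2} W(t) f + R(t)f,
\]
where the remainder $R(t)f$ gains an extra factor $(1+t)^{-1}$ in the decay compared to $e^{t\Delta}f$, and the wave piece $e^{-t/2}W(t)f$ decays exponentially (using $\|W(t)f\|_{L^p}\lesssim t^{1/p}\|f\|_{L^1}$ interpolated from $L^1$ and $L^\infty$). Consequently, the asymptotic large-$t$ behavior of $\|S(t)f\|_{L^p}$ is governed entirely by $\|e^{t\Delta}f\|_{L^p}$. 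The small-$t$ regime is absorbed into the constant using the Schwartz regularity of $f$: on any compact interval of $t$ the prefactor $(1+t)^{-\alpha}$ stays bounded away from zero, while $\|S(t)f\|_{L^p}$ stays bounded by a constant depending on $f\in\mathcal S$.

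For the baseline estimate, I would invoke Young's inequality together with the one-dimensional Gaussian bound $\|G_t\|_{L^p} \lesssim t^{-\frac 1 2(1-\frac 1 p)}$, where $G_t(x)=(4\pi t)^{-1/2}e^{-x^2/4t}$ denotes the one-dimensional heat kernel.

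For the improved estimates I would use Taylor expansion of $G_t(x-y)$ around $y=0$. When $M_0(f)=0$, writing
\[
G_t(x-y) = G_t(x) - y\int_0^1 (\partial_x G_t)(x-sy)\,ds,
\]
the first term annihilates against $f$ by the moment vanishing, leaving
\[
e^{t\Delta} f(x) = -\int_{\mathbb R} y\,f(y)\int_0^1 (\partial_x G_t)(x-sy)\,ds\,dy.
\]
Applying Minkowski in $L^p_x$ and using $\|\partial_x G_t\|_{L^p}\lesssim t^{-1/2}\|G_t\|_{L^p}$ produces the extra $(1+t)^{-1/2}$ factor, picking up a harmless $\|y f(y)\|_{L^1}$ that is finite since $f\in\mathcal S$ and that gets absorbed into the constant $C$. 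When both $M_0(f)=M_1(f)=0$, I would push the Taylor expansion one order further,
\[
G_t(x-y) = G_t(x) - y\,(\partial_x G_t)(x) + y^2\int_0^1 (1-s)(\partial_x^2 G_t)(x-sy)\,ds,
\]
so that the zeroth and first order terms both vanish after integrating against $f$, and the second-derivative scaling $\|\partial_x^2 G_t\|_{L^p}\lesssim t^{-1}\|G_t\|_{L^p}$ yields the advertised $(1+t)^{-1}$ gain, again at the price of a factor $\|y^2 f(y)\|_{L^1}$ which is finite for Schwartz $f$.

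The main obstacle is the uniform treatment of small $t$, where the derivatives of the heat kernel blow up and the Taylor-remainder estimates become singular. The cleanest workaround is to split into $t\le 1$ and $t\ge 1$: on $t\le 1$ one discards the moment refinement and uses a direct $L^1 \to L^p$ bound on the Schwartz class (absorbing constants into $C$), while on $t\ge 1$ the derivative estimates for $G_t$ are harmless and deliver exactly the claimed decay rates.
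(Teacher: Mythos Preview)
Your approach is correct and coincides with the paper's: the paper likewise reduces to the heat semigroup via Lemma~\ref{Lemma:SEstimate} and then invokes the Taylor (moment) expansion of $e^{t\Delta}f$ to gain an extra $t^{-1/2}$ per vanishing moment, referring to the literature for details. One minor simplification: the third estimate of Lemma~\ref{Lemma:SEstimate} already gives $\|S(t)f-e^{t\Delta}f\|_{L^p}\lesssim (1+t)^{-1/(2p')-1}\|f\|_{L^1}$ directly, so peeling off the wave piece $e^{-t/2}W(t)f$ separately is unnecessary.
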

\noindent
Lemma \ref{Lemma:SEstimateWithoutMode} follows from Lemma \ref{Lemma:SEstimate}
and the following formal expansion of $e^{t \Delta} f$,
which follows from the Taylor expansion of heat kernel $g(x) = e^{-x^2/4}$:
    \[
    e^{t \Delta} f(x)
    = \sum_{k=0}^\infty \frac{1}{t^{k/2}} M_k(f) g^{(k)} \bigg( \frac{x}{\sqrt t} \bigg).
    \]
For the detail of relation between the decay rate of $e^{t \Delta} f$ and moments of $f$,
we refer the reader \cite{DK06,DZ92,FM01,GGS10,IKK14}, for example.

Now, we split the proof into 2 parts.

\subsubsection{case where $M_1(u_0+u_1) \neq 0$}

We put
    \begin{align*}
    \| u \|_{Y(T)}
    &= \sup_{0 \leq t \leq T}
    (t+1)^{\frac 1 2} \{ \|u(t)\|_{W^{1,1}}
    + (t+1)^{\frac{1}{2p'}} \|u(t)\|_{W^{1,p}} \}\\
    &+ \sup_{0 \leq t \leq T}
    (1+t)
    (\| \partial_t u(t)\|_{L^1}
    + (1+t)^{\frac{1}{2p'}} \| \partial_t u(t)\|_{L^p}).
    \end{align*}
We claim that mild $W^{1,1} \cap W^{1,p}$ solutions $u$ to \eqref{eq:DW} satisfies
    \begin{align}
    \|u\|_{Y(T)}
    \lesssim \varepsilon + \|u\|_{Y(T)}^p
    (T+1)^{1/2} \int_0^T (1+t)^{-\frac{2p-1}{2}} dt.
    \label{eq:ClaimY}
    \end{align}
\eqref{eq:ClaimY} implies that
if $p > 3/2$,
then for $0 \leq T \leq \widetilde T_{2,p}$,
mild solution is constructed in $Y(\widetilde T_{2,p})$
by a standard contraction argument,
where $\widetilde T_{2,p}$ satisfies
    \[
    (\widetilde T_{2,p}+1)^{1/2} \int_0^{\widetilde T_{2,p}} (1+t)^{-\frac{2p-1}{2}} dt
    = C \varepsilon^{-p+1}.
    \]
We note that $\widetilde T_{2,p}$ is given
    \[
    \widetilde T_{2,p}=
    \begin{cases}
    C \varepsilon^{-2(p-1)}
    &\mathrm{if} \quad p > 3/2,\\
    C e^{2W(C\varepsilon^{-1/2})}
    &\mathrm{if} \quad p = 3/2,\\
    C \varepsilon^{-\frac{p-1}{2-p}}
    &\mathrm{if} \quad p < 3/2,
    \end{cases}
    \]
where $W$ is the Lambert function defined by $z = W(z) e^{W(z)}$ for $z \geq 0$.
Indeed, when $p=3/2$, for $1 < T< \widetilde T_{2,p}$,
we have
    \begin{align*}
    &\sqrt{\widetilde T_{2,p}+1} \log(\widetilde T_{2,p}+1)
    = C \varepsilon^{-1/2}\\
    &\Rightarrow
    \sqrt{\widetilde T_{2,p}+1} \log \sqrt{\widetilde T_{2,p}+1}
    = C \varepsilon^{-1/2}\\
    &\Rightarrow
    \widetilde T_{2,p} +1
    = e^{2W(C \varepsilon^{-1/2})}.
    \end{align*}
Now we estimate $u(\widetilde T_{2,p})$ as
    \[
    \| u(\widetilde T_{2,p}) \|_{W^{1,1} \cap W^{1,p}}
    + \| \partial_t u(\widetilde T_{2,p}) \|_{L^{1} \cap L^{p}}
    \leq
    \begin{cases}
    C \varepsilon^p
    &\mathrm{if} \quad p > 3/2,\\
    C \varepsilon e^{-W(C \varepsilon^{-1/2})}
    &\mathrm{if} \quad p = 3/2,\\
    C \varepsilon^{\frac{3-p}{2(2-p)}}
    &\mathrm{if} \quad p < 3/2.
    \end{cases}
    \]
We note that
    \[
    \lim_{\varepsilon \to 0}
    \frac{\varepsilon^{3/2}}{\varepsilon e^{-W(C\varepsilon^{-1/2})}}
    =0.
    \]
Then by a reconstruction of solutions from $t=\widetilde T_{2,p}$
with $X(t-\widetilde T_{2,p})$ norm,
$T_0$ is estimated by
    \[
    T_0 
    \geq
    \begin{cases}
    C \varepsilon^{\frac{-2p(p-1)}{3-p}}
    &\mathrm{if} \quad p > 3/2,\\
    C \varepsilon^{-2/3} e^{2W(C \varepsilon^{-1/2})/3}
    &\mathrm{if} \quad p = 3/2,\\
    C \varepsilon^{-\frac{p-1}{2-p}}
    &\mathrm{if} \quad p < 3/2.
    \end{cases}
    \]

Now we show \eqref{eq:ClaimY}.
Lemmas \ref{Lemma:SEstimate} and \ref{Lemma:SEstimateWithoutMode} imply that
the estimate
    \[
    \| S(t) (u_0+u_1) + \partial_t S(t) u_0 \|_{L^1 \cap L^p}
    \leq C (1+t)^{-1/2}
    \]
holds.
Moreover, we have
    \begin{align*}
    \bigg\| \int_0^t S(t-\tau) |u(\tau)|^p \bigg\|_{L^1 \cap L^p}
    &\leq C \int_0^t \| u(\tau)\|_{L^p \cap L^{p^2}}^p d \tau\\
    &\leq C \| u \|_{Y(t)}^p \int_0^t (1+\tau)^{-\frac{2p-1}{2}} d\tau.
    \end{align*}
Here we note that $L^{p^2}$ is an interpolation space of $L^p$ and $W^{1,p}$.
Since derivatives of $u$ are estimated similarly,
the estimates above imply \eqref{eq:ClaimY}.
For the detail of estimates with derivatives,
we refer \cite[Proof of Theorem 1.2]{MN03}, for example.
\if{
We can have alternative and direct proof of
\begin{align}\label{eq.ebl6}
    \bigg\| \int_0^t S(t-\tau) |u(\tau)|^p \bigg\|_{L^1 \cap L^p}
    &\leq C \| u \|_{Y(t)}^p \int_0^t (1+\tau)^{-\frac{2p-1}{2}} d\tau.
    \end{align}
   Indeed,
we can proceed as follows
$$ \bigg\| \int_0^t S(t-\tau) |u(\tau)|^p \bigg\|_{ L^p} \lesssim \int_0^t (t-\tau)^{-1/(2p^\prime)} \| u(\tau)\|_{ L^{p}}^p d \tau\\
    \leq $$ $$ \leq C \| u \|_{Y(t)}^p \int_0^t (t-\tau)^{-1/(2p^\prime)} (1+\tau)^{-\frac{2p-1}{2}} d\tau$$
    On the other hand, it is not difficult to verify
    $$ \int_0^t (t-\tau)^{-1/(2p^\prime)} (1+\tau)^{-\frac{2p-1}{2}} d\tau \leq C \int_0^t  (1+\tau)^{-\frac{2p-1}{2}} d\tau, t >1,$$
    since
    $$ \int_{t-1}^t  (t-\tau)^{-1/(2p^\prime)} d\tau \sim 1, \ \int_{t-1}^t  (1+\tau)^{-\frac{2p-1}{2}} d\tau \sim t^{-\frac{2p-1}{2}} . $$
Therefore we have \eqref{eq.ebl6}.
}\fi

\subsubsection{case where $M_1(u_0+u_1) = 0$}
In this case, we put
    \begin{align*}
    \| u \|_{Z(T)}
    &= \sup_{0 \leq t \leq T}
    (1+t) \{ \|u(t)\|_{W^{1,1}}
    + (1+t)^{\frac{1}{2p'}} \|u(t)\|_{W^{1,p}} \}\\
    &+ \sup_{0 \leq t \leq T} (1+t) \{ \| \partial_t u(t)\|_{L^1}
    + (1+t)^{\frac{1}{2p'}} \| \partial_t u(t)\|_{L^p}) \}.
    \end{align*}
A similar computation implies that
for $1 < p \leq 3$,
mild solutions are constructed in $Z(C \varepsilon^{1-p})$
by a standard contraction argument.
Then we estimate
    \[
    \| u(C \varepsilon^{1-p}) \|_{W^{1,1} \cap W^{1,p}}
    + \| \partial_t u(C \varepsilon^{1-p}) \|_{L^{1} \cap L^{p}}
    \lesssim \varepsilon^p.
    \]
Therefore, the estimate
$T_0 \geq T_{1,p}(C \varepsilon^p)$.


\subsection{Upper bound for lifespan}
\label{subsection:UpperBound}
In this subsection,
we deploy a modified approach of \cite{LZ95}.
The following lemma plays an essential role.

\begin{Lemma}
\label{Lemma:ModifiedPointwiseControl}
Let $F:[0,\infty) \times \mathbb R \to [0,\infty)$.
Let $t_0 \geq 4$ and let
\[
D_\pm(t)
= \{ x \in \mathbb R; \sqrt t /2 < \pm x < \sqrt t \}.
\]
Then
    \begin{align*}
    &\inf_{x \in D_\pm (t)}
    \sqrt t \int_0^t e^{-\frac{t-\tau}{2}} \int_{|x-y|\leq t - \tau}
    I_0(\frac{\sqrt{(t-\tau)^2-(x-y)^2}}{2}) F(\tau,y) dy\\
    &\gtrsim
    \int_{t-1}^t (t-\tau) G_\pm (\tau) d \tau + \int_{t_0}^{t-1} G_\pm (\tau) d\tau,
    \end{align*}
where $G_\pm(t) = \inf_{x \in D_\pm(t)} \sqrt{t} F(t,x)$
\end{Lemma}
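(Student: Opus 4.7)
The strategy is to lower-bound the iterated integral by restricting the $y$-integration to the slice
\[
\{(\tau, y) : y \in D_\pm(\tau), \ |x - y| \leq t - \tau\},
\]
where the definition of $G_\pm$ supplies the pointwise bound $F(\tau, y) \geq G_\pm(\tau)/\sqrt{\tau}$. By the reflection symmetry $x \mapsto -x$ it suffices to treat the case $D_+$. Denoting by
\[
K(t,\tau,x,y) = e^{-(t-\tau)/2} I_0\bigl(\sqrt{(t-\tau)^2 - (x-y)^2}/2\bigr)
\]
the Riemann-type kernel and by $L(\tau, t, x)$ the integral of $K$ over $y \in D_+(\tau) \cap \{|x - y| \leq t - \tau\}$, the problem reduces to proving the uniform-in-$x$ estimates
\[
\sqrt{t}\, L(\tau, t, x) \gtrsim \sqrt{\tau}\,(t-\tau) \quad (\tau \in [t-1,t]), \qquad \sqrt{t}\, L(\tau, t, x) \gtrsim \sqrt{\tau} \quad (\tau \in [t_0, t-1]).
\]
Indeed, integrating these against $G_+(\tau)/\sqrt{\tau}$ in $\tau$ delivers the two terms on the right-hand side.

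For the kernel itself we need two estimates. The trivial bound $I_0 \geq 1$ already gives $K \geq e^{-(t-\tau)/2} \gtrsim 1$ whenever $t - \tau \leq 1$. For $t - \tau \geq 1$, combining the asymptotic $I_0(z) \gtrsim e^z / \sqrt{z}$ with the elementary inequality $\sqrt{1 - u^2} \geq 1 - u^2$ yields the heat-like lower bound
\[
K(t,\tau,x,y) \gtrsim \frac{e^{-(x-y)^2/(2(t-\tau))}}{\sqrt{t-\tau}}
\]
on $\{|x-y| \leq t-\tau\}$; in particular $K \gtrsim 1/\sqrt{t-\tau}$ on $\{|x-y| \leq \sqrt{t-\tau}\}$. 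These two bounds can be viewed as pointwise refinements of the heat-kernel approximation from Lemma \ref{Lemma:SEstimate}.

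It remains to bound below the length of the slice $D_+(\tau) \cap (x - r, x + r)$, with $r = t - \tau$ in the near regime $\tau \in [t-1, t]$ and $r = \sqrt{t-\tau}$ in the regime $\tau \in [t/2, t-1]$. The key tool is the identity
\[
\sqrt{t} \leq \sqrt{\tau} + \sqrt{t-\tau},
\]
which follows from $(\sqrt{\tau} + \sqrt{t-\tau})^2 \geq t$ and which guarantees that $(x - r, x + r)$ always overlaps $D_+(\tau)$ for $x \in D_+(t)$. A short case analysis on the position of $x$ relative to $D_+(\tau)$ then shows that the intersection has length $\gtrsim r$, the extreme position $x = \sqrt{t}$ being controlled through
\[
\sqrt{\tau} + \sqrt{t-\tau} - \sqrt{t} = \frac{2\sqrt{\tau(t-\tau)}}{\sqrt{\tau}+\sqrt{t-\tau}+\sqrt{t}}.
\]
For $\tau \in [t_0, t/2]$ one argues differently: the full $D_+(\tau)$ is then contained in $\{|x-y| \leq t - \tau\}$ (for $t$ sufficiently large) with $(x-y)^2/(2(t-\tau)) \lesssim 1$ for every $y \in D_+(\tau)$, so the heat-like bound gives $K \gtrsim 1/\sqrt{t-\tau}$ on all of $D_+(\tau)$, hence $L(\tau,t,x) \gtrsim \sqrt{\tau}/\sqrt{t-\tau} \geq \sqrt{\tau}/\sqrt{t}$. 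The main technical obstacle is the uniform geometric control on the intersection length as $x$ approaches the endpoints of $D_+(t)$; once it is in place, inserting the kernel estimate and $G_+(\tau)/\sqrt{\tau}$ and using $\sqrt{t}/\sqrt{\tau} \geq 1$ closes the argument.
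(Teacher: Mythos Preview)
Your proof is correct and follows the same approach as the paper's: restrict $y$ to $D_\pm(\tau)$, split the $\tau$-integral at $t-1$, use $I_0\geq 1$ on the near piece, and use the large-argument asymptotics of $I_0$ (giving the $1/\sqrt{t-\tau}$ factor) on the far piece. The paper's argument is much terser and does not spell out the intersection-length estimates at all; your explicit heat-kernel lower bound via $\sqrt{1-u^2}\geq 1-u^2$ and your further split at $\tau=t/2$ are elaborations that make rigorous the geometric step the paper leaves implicit.
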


\begin{Remark}
In \cite{LZ95},
$D_\pm(t)$ is replaced by $\{x ,|x| < \sqrt t\}$.
Since we shall use the asymptotic form of linear solutions with first moment of initial data,
the sign restriction of $x$ is required.
\end{Remark}

\begin{proof}
For $x \in D_\pm(t)$, we obtain    
    \begin{align*}
    &\sqrt t \int_0^t e^{-\frac{t-\tau}{2}} \int_{|x-y|\leq t - \tau}
    I_0(\frac{\sqrt{(t-\tau)^2-(x-y)^2}}{2}) F(\tau,y) dy \, d\tau \\
    &\gtrsim \int_{t-1}^t G(\tau) \int\limits_{\substack{|x-y|\leq t - \tau\\ y \in D_\pm(\tau)}} dy \, d\tau
    + \int_{t_0}^{t-1} \frac{1}{\sqrt{t - \tau}} G_\pm (\tau)
    \int\limits_{\substack{|x-y|\leq \sqrt{t - \tau}\\ y \in D_\pm(\tau)}} dy \, d \tau\\
    &\gtrsim \int_{t-1}^t (t-\tau) G_\pm(\tau) d \tau
    + \int_{t_0}^{t-1} G_\pm (\tau) d \tau,
    \end{align*}
where in order to estimate the second term of the LHS of last estimate,
we have used the asymptotic behavior of $I_0$:
    \[
    I_0(y) \sim \sqrt{\frac{1}{2 \pi y}} e^y.
    \]
For the asymptotic behavior of $I_0$,
we refer \cite[8.451 5]{GR07}.
\end{proof}

Now we remind a lifespan estimate of some integral inequalities
introduced in \cite[Corollary 3.2]{LZ95}.

\begin{Lemma}[{\cite[Corollary 3.2]{LZ95}}]
\label{Lemma:C3.2LZ95}
Let $p > 1$ and $0 \leq \beta \leq 1$ and $t_0 > 4$.
Let $\varepsilon > 0$ small enough,
$T > t_0+1$, and $v: [t_0,T) \to [0,\infty)$ satisfy
    \[
    v(t)
    \gtrsim \varepsilon
    + \int_{t-1}^t (t-\tau) v(\tau)^p \tau^{-\beta} d \tau
    + \int_{t_0}^{t-1} v(\tau)^p \tau^{-\beta} d \tau.
    \]
Then
    \[
    T \leq
    \begin{cases}
    C \varepsilon^{-\frac{p-1}{1-\beta}},
    &\mathrm{if} \quad 0\leq \beta < 1,\\
    e^{-C \varepsilon^{p-1}},
    &\mathrm{if} \quad \beta = 1.
    \end{cases}
    \]
\end{Lemma}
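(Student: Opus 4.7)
My plan is to run a Kato--Zhou style bootstrap iteration on the integral inequality. Writing $F(t)=\int_{t_0}^t v(\tau)^p\tau^{-\beta}\,d\tau$, the hypothesis reads
\[
v(t)\gtrsim \varepsilon+F(t-1)+L(t),\qquad L(t)=\int_{t-1}^t(t-\tau)v(\tau)^p\tau^{-\beta}\,d\tau,
\]
for $t\ge t_0+1$. The $\varepsilon$-term provides the baseline $v(t)\gtrsim\varepsilon$ on $[t_0,T)$. Feeding this into $F(t-1)$ and using $t-1\ge t/2$ for $t\ge 2t_0$ gives the first improved bound
\[
v(t)\gtrsim
\begin{cases}
\varepsilon^p\,t^{1-\beta},& \beta<1,\\
\varepsilon^p\log t,& \beta=1,
\end{cases}
\]
valid on $[2t_0,T)$.

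The engine of the proof is an inductive iteration: assuming $v(t)\ge A_k t^{a_k}$ on $[T_k,T)$, substituting into $F(t-1)$ and integrating yields
\[
v(t)\ge \frac{c\,A_k^p}{pa_k+1-\beta}\,t^{pa_k+1-\beta}
\]
on $[T_{k+1},T)$ with $T_{k+1}=2T_k$. I would solve the recursion $a_{k+1}=pa_k+1-\beta$ explicitly as $a_k=(1-\beta)(p^{k+1}-1)/(p-1)$, and show inductively that the prefactor admits the lower bound $A_k\ge (c^*\varepsilon)^{p^k}$ for a constant $c^*>0$ depending only on $p$ and $\beta$, by taking logarithms and summing the geometric series $\sum_k p^{-k}\log(pa_k+1-\beta)$.

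The contradiction step is then standard: for any fixed $t^*\in[t_0,T)$, sending $k\to\infty$ in the iterated bound gives
\[
v(t^*)\ge (t^*)^{-(1-\beta)/(p-1)}\bigl[c^*\varepsilon\,(t^*)^{(1-\beta)/(p-1)}\bigr]^{p^k},
\]
so if $c^*\varepsilon(t^*)^{(1-\beta)/(p-1)}>1$ the right-hand side diverges, contradicting the finiteness of $v$ on $[t_0,T)$. This forces the claimed bound $T\le C\varepsilon^{-(p-1)/(1-\beta)}$. The case $\beta=1$ is handled identically with $\log t$ replacing $t^{1-\beta}$, and the critical parameter $\varepsilon(\log t^*)^{1/(p-1)}$ then leads to $T\le\exp(C\varepsilon^{-(p-1)})$.

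The main obstacle I expect is precisely the constant bookkeeping in the iteration. The denominator factor $(pa_k+1-\beta)^{-1}$ decays like $p^{-k}$ at each stage, and one must ensure that the product of these losses across all $k$ is absorbed into a finite multiplicative constant rather than into powers of $\varepsilon$, which would otherwise degrade the final lifespan exponent. The local term $L(t)$ does not participate in the iteration; it serves as a safety net that guarantees coercivity of the inequality as $t\to T^-$, allowing the contradiction to close at the limiting time rather than strictly before it.
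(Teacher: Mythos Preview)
The paper does not give its own proof of this lemma; it is quoted from Li--Zhou \cite[Corollary~3.2]{LZ95} and used as a black box in the upper-bound argument of Section~2.2. There is therefore no in-paper proof to compare your proposal against. Your iteration scheme is the standard Kato--John--Zhou bootstrap and is, in outline, exactly how such integral inequalities are handled in the source literature.

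That said, there is one genuine gap in your sketch. With $T_{k+1}=2T_k$ you obtain $T_k=2^k t_0\to\infty$, so you cannot ``send $k\to\infty$'' at a \emph{fixed} $t^*\in[t_0,T)$: the $k$th iterated bound is only valid on $[T_k,T)$, and eventually $T_k>t^*$. The standard repair is to sharpen the doubling. Since
\[
\int_{T_k}^{t}\tau^{a_{k+1}-1}\,d\tau\ \ge\ \frac{t^{a_{k+1}}}{2\,a_{k+1}}
\qquad\text{whenever }t\ge 2^{1/a_{k+1}}T_k,
\]
it suffices to take $T_{k+1}=2^{1/a_{k+1}}T_k$; because $a_k\sim c\,p^k$ one has $\sum_k a_{k+1}^{-1}<\infty$, hence $T_k\to T_\infty<\infty$. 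For any $t^*\in(T_\infty,T)$ your contradiction step then goes through verbatim and yields the stated bound. The constant bookkeeping you flag (the product of the $a_{k+1}^{-1}$ factors) is handled exactly as you describe, via $\sum_k p^{-k}\log a_k<\infty$.

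Two minor remarks. Your conclusion $T\le\exp(C\varepsilon^{-(p-1)})$ in the case $\beta=1$ is the correct one; the displayed $e^{-C\varepsilon^{p-1}}$ in the paper's statement is evidently a typographical slip (note that $e^{-C\varepsilon^{p-1}}<1<t_0$). And your description of $L(t)$ as a ``safety net'' that lets the contradiction close \emph{at} $T$ is not needed: $L(t)\ge 0$ is simply discarded in the iteration, and the contradiction is obtained at an interior point $t^*<T$.
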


\begin{proof}[Proof of the upper bound of Theorem \ref{Theorem:Main}]
For simplicity, we abbreviate $M_1(f_0+f_1)$ as $M_1$.
We note that when $M_1 = 0$  or $p > 3/2$,
the lifespan is estimated as \cite{FG23}.
Therefore we consider the case where $M_1 \neq 0$ and $p \leq 3/2$.
Put $w_\pm(t) = \inf_{x \in D_\pm(t)} t u(t,x)$.
When $\pm M_1 \neq 0$, 
we have
    \[
    \inf_{x \in D_\pm(t)} \{ S(t)(u_0+u_1)(x) + \partial_t S(t) u_0(x) \}
    \gtrsim t^{-1} \varepsilon | M_1|.
    \]
Therefore, Lemma \ref{Lemma:ModifiedPointwiseControl}
implies that we have,
    \begin{align*}
    w_\pm(t)
    &\gtrsim \varepsilon |M_1|
    + \sqrt t \int_{t-1}^t (t-\tau) w_\pm(\tau)^p
    \tau^{-p+\frac 1 2} d \tau
    + \sqrt t \int_{t_0}^{t-1} w_\pm(\tau)^p \tau^{-p+\frac 1 2} d \tau\\
    &\gtrsim \varepsilon
    + \varepsilon^p \sqrt t \int_{t_0}^{t-1} \tau^{-p+1/2} d \tau.
    \end{align*}
Then we have
    \[
    \inf_{t \in [t_0,\widetilde T_{2,p}]} w_\pm(t)
    \gtrsim \varepsilon.
    \]
Now we repeat the argument of \cite{LZ95} from $t=\widetilde T_{2,p}$ with $v(t) = \inf_{x \in D_\pm(t)} \sqrt t u(t)$.
Since
    \begin{align*}
    v(\widetilde T_{2,p})
    &= \widetilde T_{2,p}^{-1/2} w(\widetilde T_{2,p})\\
    &\geq
    \begin{cases}
    C \varepsilon e^{-W(C \varepsilon^{-1/2})}
    &\mathrm{if} \quad p = 3/2,\\
    C \varepsilon^{\frac{3-p}{2(2-p)}}
    &\mathrm{if} \quad p < 3/2,
    \end{cases}
    \end{align*}
Lemmas \ref{Lemma:ModifiedPointwiseControl} and \ref{Lemma:C3.2LZ95} imply that
the estimate
    \[
    T_0 \leq C v(\widetilde T_{2,p})^{-\frac{2(p-1)}{3-p}},
    \]
holds.
This coincides the first estimates of (2) and (3) in Theorem \ref{Theorem:Main}.
\end{proof}

\section*{Acknowledgement}
The first author was supported in part by
JSPS Grant-in-Aid for Early-Career Scientists No. 20K14337. The second author was partially supported by   Gruppo Nazionale per l'Analisi Matematica, by the project PRIN  2020XB3EFL with the Italian Ministry of Universities and Research, by Institute of Mathematics and Informatics, Bulgarian Academy of Sciences, by Top Global University Project, Waseda University and the Project PRA 2022 85 of University of Pisa.

\bibliographystyle{plain}
\bibliography{EL_BG}

\end{document}